\newtheorem{theorem}{Theorem}[section]
\newtheorem{lemma}[theorem]{Lemma}
\newtheorem{proposition}{Proposition}
\newtheorem{claim}{Claim}
\theoremstyle{definition}
\newtheorem{remark}{Remark}
\newtheorem{mainthm}{Theorem}
\newtheorem{maincor}[mainthm]{Corollary}
\DeclareMathOperator{\IFS}{IFS}
\newcommand{\arXiv}[1]{\rm{arXiv} #1}
\begin{document}

\title[Fiberwise orbits in minimal IFSs]
      {Density of fiberwise orbits in minimal iterated function systems on the circle}

\author[Barrientos]{Pablo G. Barrientos}
\address{\footnotesize \centerline{Instituto de Matem\'atica e Estat\'istica, UFF}
   \centerline{Rua M\'ario Santos Braga s/n - Campus Valonguinhos, Niter\'oi,  Brazil}}
\email{barrientos@id.uff.br}

\author[Fakari]{Abbas Fakhari}
\address{\footnotesize Department of Mathematics, Shahid Beheshti University \\
G.C.Tehran 19839, Tehran, Iran \\ a\_fakhari@sbu.ac.ir}

\author{A. Sarizadeh}
\address{Department of Mathematics, Ilam University, Ilam, Iran}
\email{a.sarizadeh@mail.ilam.ac.ir}

\keywords{Minimal IFS on the circle, minimality of orbital
branches (fiberwise orbits)}

\begin{abstract}
We study the minimality of almost every orbital branch of minimal
iterated function systems (IFSs). We prove that this kind of
minimality holds for forward and backward minimal IFSs generated
by orientation-preserving homeomorphisms of the circle. We provide
new examples of iterated functions systems where this behavior
persists under perturbation of the generators.
\end{abstract}

\maketitle

\section{Introduction}
\label{s:1} Group and semigroup actions on the circle is the main
subject of recent studies and still attract lots of attention.
Much of these efforts have been focused so far to explore the rich
dynamics of finitely generated actions, i.e. dynamics generated by
finitely many maps. Here, with regard to this issue, we consider
dynamical systems generated by several maps on a compact metric
space X, called iterated function systems (IFSs). More precisely,
given maps $f_1, \dots,f_k$ of $X$, we study the action of the
semigroup $\IFS(f_1,\dots,f_k)$ generated by these maps. The orbit
of a point $x$ for $\IFS(f_1,\ldots,f_k)$ is a set of points
$y=h(x)$, for some $h\in\IFS(f_1,\ldots,f_k)$. The IFS is
\emph{minimal} if every orbit is dense in $X$. A sequence of
iterates $x_{n+1}=f_{\omega_n}(x_n)$ with $\omega_n \in
\{1,\dots,k\}$ chosen randomly and independently is called
\emph{branch orbit} starting at $x_0=x$. The sequence of
compositions
$$
   f_{\omega}^n=f_{\omega_n}\circ\dots \circ f_{\omega_1}, \quad \text{for every $n\in
   \mathbb{N}$},
$$
is called \emph{orbital branch} corresponding to
$\omega=\omega_1\omega_2\dots\in\Sigma_k^+=\{1,\dots,k\}^\mathbb{N}$
and
$$
\mathcal{O}^+_\omega(x)=\{f_\omega^n(x): n\in \mathbb{N}\}, \quad
x\in X
$$
is the \emph{fiberwise orbit}. Following \cite{BV11} we consider
any probability $\mathbb{P}$ on $\Sigma_k^+$ with the following
property:  there exists $0<p\leq 1/k$ so that $\omega_n$ is
selected randomly from $\{1,\dots,k\}$ in such a way that the
probability of $\omega_n=i$ is greater or equal to $p$,
for every $i\in \{1,\dots,k\}$ and every $n\in \mathbb{N}$. More
formally, in terms of conditional probability,
$$
\mathbb{P}( \omega_n=i \ | \ \omega_{n-1},\ldots,\omega_1 ) \geq
p.
$$
Observe that the standard Bernoulli measures on $\Sigma_k^+$ are
typical examples of these kind of probabilities.

In the recent works~\cite{BV11,BV12,BR13}, the limit set of IFSs
have been studied providing some conditions to guarantee
minimality. In the present paper, focusing essentially on the same
subject, we study the density of fiberwise orbits of minimal IFSs,
mainly on the circle. This goal is motivated in part by the main
result in~\cite{BV11} saying that for a minimal IFS, almost every
branch orbit starting from an arbitrary point is dense in $X$. To
be more precise, for each point $x$ there exists a set $\Omega(x)
\subset \Sigma_k^+$ with $\mathbb{P}(\Omega(x))=1$ such that
$\mathcal{O}^+_\omega(x)$ is dense in $X$, for every $\omega\in
\Omega(x)$. A logical question would be whether the  set
$\Omega(x)$ could be independent of the choice of $x$. In other
words, if almost every orbital branch of a minimal IFS acts
minimally, i.e.,
\begin{equation}
\label{eq:1} X=\overline{\mathcal{O}^+_\omega(x)}, \quad \text{for
every}\ x\in X\ \text{and}\ \text{for almost every $\omega\in
\Sigma_k^+$}.
\end{equation}
We obtain an answer to this question when the ambient space is the
circle.

There are several obstacles and solutions to overcome the
dependence of $\Omega(x)$ to the initial point $x$ in general
case. For instance, this is done in~\cite{BL13} under a condition
named {\it strongly-fibred}. More precisely, the authors have
shown that when a minimal IFS is strongly-fibred then~\eqref{eq:1}
holds for 
every $\omega \in \Sigma_k^+$ with dense orbit under the Bernoulli
shift map. Recall that, a minimal IFS is ``strongly-fibred'' if
for every open set $U$ in $X$ there exists $\omega\in\Sigma_k^+$
such that $X_\omega \subset U$, where
$$
   X_\omega= \bigcap_{n=1}^\infty \hat{f}^n_{\omega}(X),
   \qquad \hat{f}_\omega^n=f_{\omega_1}
   \circ\dots\circ f_{\omega_{n-1}}\circ f_{\omega_n}.
$$
Historical examples of strongly-fibred minimal IFSs are those
generated by contraction maps. In this case, there is a wealth of
classical results, essentially going back to the seminal work of
Hutchinson~\cite{Hut81}. Meanwhile, a \emph{weakly hyperbolic} IFS
is one for which
$$
\lim_{n\to \infty} \mathrm{diam}(\hat{f}^n_\omega(X))=0, \quad
\text{for every $\omega \in \Sigma_k^+$.}
$$
It turns out that the minimal weakly-hyperbolic IFSs are
strongly-fibred. Weakly hyperbolic IFS characterized
in~\cite{AJS12} as those for which 
\begin{equation}
\label{eq:contracting} \lim_{n\to \infty}
\sup_{\omega\in\Sigma_k^+} d(f^n_{\omega}(x),f^n_\omega(y))=0
\quad \text{for every $x,y\in X$.}
\end{equation}

A point $q\in X$ is a \emph{(repelling/attracting) periodic point}
for $\IFS(f_1,\dots,f_k)$ if there exists
$h\in\IFS(f_1,\dots,f_k)$ such that $q$ is a
(repelling/attracting) fixed point of $h$. Clearly, weak
hyperbolicity prevents the existence of repelling periodic points.
A weaker property than~\eqref{eq:contracting} which allows the
existence of repelling periodic points for the IFS is
``contraction of almost every orbital branch'' which was
introduced as \emph{synchronization} in~\cite{hom13}. This means
that there exists a set $\Omega \subset \Sigma_k^+$ with
$\mathbb{P}(\Omega)=1$ such that for every $\omega\in \Omega$,
there is a dense set $W(\omega) \subset X$ with
\begin{equation*}
\lim_{n\to\infty}d(f^n_\omega(x),f^n_\omega(y))=0, \quad \text{for
every $x,y\in W(\omega)$}.
\end{equation*}
Examples of minimal IFSs on the circle satisfying this kind of
contracting property can be found in~\cite[Theorem~1]{KN04} and
\cite[Theorem~1]{Hom11}. These examples assume, among other
things, that the IFS is \emph{forward and backward minimal}. That
is,  the semigroup generated by $f_1,\dots,f_k$ acts minimally on
the circle and the same holds for the semigroup generated by their
inverses. In our first result,
we show the minimality of almost every orbital branch and the
density of periodic points for these kind of IFSs.
\begin{mainthm}
\label{thmB} \emph{Let $f_1,\dots,f_k$ be orientation-preserving
homeomorphisms of the circle. Assume that the IFS generated by
these maps is forward and backward minimal.
Then there exists $\Omega \subset \Sigma_k^+$ with
$\mathbb{P}(\Omega)=1$ such that
$$
    \mathbb{S}^1= \overline{\mathcal{O}^+_\omega(x)}, \quad  \text{for every $x \in S^1$ and $\omega \in \Omega$.}
$$
In particular,  $\Omega$ contains all the sequences with dense
orbit under the Bernoulli shift map and if there exists a
homeomorphism in $\IFS(f_1,\dots,f_k)$ which is not conjugate to a
rotation, then the periodic periodic points of
$\IFS(f_1,\dots,f_k)$ are dense in $\mathbb{S}^1$.}
\end{mainthm}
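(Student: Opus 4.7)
The plan is to split the argument into two cases following the classical dichotomy for minimal circle actions. Either every element of $\IFS(f_1,\dots,f_k)$ is conjugate to a rotation (Case~A), or the hypothesis of the ``moreover'' clause holds, i.e.\ some element of the IFS is not conjugate to a rotation (Case~B). In Case~A, forward and backward minimality reduce the problem (via classical results on minimal circle actions, applied to the group generated by the $f_i$ and their inverses) to the existence of a single homeomorphism $\phi:S^1\to S^1$ simultaneously conjugating each $f_i$ to a rotation $R_{\alpha_i}$. After conjugation every $f_\omega^n$ is an isometry, so $\mathcal{O}^+_\omega(x)$ is only a translate of $\mathcal{O}^+_\omega(x_0)$; density of the latter implies density of the former, and the set $\Omega(x_0)$ supplied by the result of~\cite{BV11} quoted in the introduction already serves uniformly as~$\Omega$.

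In Case~B, let $h\in\IFS$ be not conjugate to a rotation, so $h$ has a fixed point (rational rotation number) or a minimal invariant Cantor set (Denjoy subcase). The plan is to promote $h$, using forward and backward minimality, to a topologically hyperbolic element $g\in\IFS$ with an attracting fixed point $p$, a repelling fixed point $q$, and the basin of $p$ equal to $S^1\setminus\{q\}$. Backward minimality supplies the pre-images needed to compose $h$ with suitable generators so as to sharpen a degenerate fixed-point configuration into a hyperbolic one; this construction is the main technical obstacle and the only place where backward minimality is really essential. With $g$ in hand, density of the periodic points of $\IFS(f_1,\dots,f_k)$ in $S^1$ follows by the classical north--south argument: given an open $U\subset S^1$, shrink it to $U_0\subset U\setminus\{q\}$, use forward minimality to find $h'\in\IFS$ with $h'(p)\in U_0$, and observe that $h'\circ g^N$ sends $\overline{U_0}$ strictly into $U_0$ for large $N$, hence has a fixed point in $U_0\subset U$ by Brouwer.

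For the density of orbital branches along every shift-dense $\omega$, fix $x\in S^1$ and a non-empty open $U\subset S^1$. Using forward minimality and the hyperbolicity of $g$, construct a finite word $v$ in $\{1,\dots,k\}$ and a small neighborhood $V_q$ of $q$ such that $f_v(S^1\setminus V_q)\subset U$ (concretely, take $v$ to be a word representing $g^N$ followed by a word representing some $h'\in\IFS$ with $h'(p)\in U$). Since $\omega$ has dense orbit under the shift, $v$ appears as a subword of $\omega$ starting at infinitely many positions $n+1$, and it remains to guarantee $f_\omega^n(x)\notin V_q$ for at least one such $n$. This last point is handled by producing, via backward minimality, a second hyperbolic element whose repelling fixed point is different from $q$, so that shift density of $\omega$ prevents the orbit of $x$ from being trapped near every repeller simultaneously; concatenating the corresponding words shows $f_\omega^{n+|v|}(x)\in U$ for some $n$. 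This gives $\overline{\mathcal{O}^+_\omega(x)}=S^1$ for every shift-dense $\omega$ and every $x\in S^1$, and since shift-dense sequences form a full $\mathbb{P}$-measure set in $\Sigma_k^+$ this simultaneously yields both $\mathbb{P}(\Omega)=1$ and the ``moreover'' clause in Case~B.
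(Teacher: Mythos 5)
Your Case~A reduction is clean once a \emph{simultaneous} conjugation to rotations is available (the translation argument correctly transfers $\Omega(x_0)$ to every $x$, and is arguably simpler than the paper's appeal to Proposition~2.1), but note that ``every element of the semigroup is individually conjugate to a rotation'' does not by itself produce a common conjugating homeomorphism; the paper gets this from case~(1) of Antonov's theorem (existence of a common invariant measure), and your case split should be organized around that trichotomy rather than around individual conjugacy classes, since a priori the individually-elliptic case need not coincide with the invariant-measure case.

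The serious gap is in Case~B. Your entire argument rests on promoting $h$ to an element $g$ of the semigroup with genuine north--south dynamics: one attracting fixed point $p$, one repelling fixed point $q$, and basin of $p$ equal to $S^1\setminus\{q\}$. You flag this as ``the main technical obstacle'' but give no construction, and in general no such element exists: in case~(3) of Antonov's theorem there is a finite-order homeomorphism $\varphi$ of order $\ell\ge 2$ commuting with all the $f_i$, hence with every element of $\IFS(f_1,\dots,f_k)$, so any $g$ in the semigroup fixing a point $p$ also fixes $\varphi(p),\dots,\varphi^{\ell-1}(p)$ with the same attracting/repelling type and can never have a single attractor and a single repeller. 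Both of your subsequent steps --- the word $v$ with $f_v(S^1\setminus V_q)\subset U$, and the Brouwer argument with $h'\circ g^N(\overline{U_0})\subset U_0$ --- use the global basin $S^1\setminus\{q\}$ and therefore collapse. The paper circumvents this entirely: it never produces a deterministic hyperbolic element, but instead uses Antonov's synchronization (Lemma~2.3) to obtain, for a.e.\ $\omega$, an $\omega$-dependent finite set $\{r_1,\dots,r_\ell\}$ off which the orbital branch $f^n_\omega$ contracts, and then builds a single finite word $\sigma=\gamma^\ell\cdots\gamma^1\omega_1\cdots\omega_{n_1+\dots+n_\ell}$ such that \emph{every} $z\in S^1$ lands in $I$ along some prefix of $\sigma$; the maps $h_i^{-1}$ are chosen by backward minimality so that a point falling into $B_\varepsilon(r_i)$ at stage $i$ has already been placed in $I$ by an earlier prefix. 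Your closing remark that shift-density ``prevents the orbit from being trapped near every repeller simultaneously'' is precisely the point that needs this every-point-has-a-good-prefix construction, and as stated it is not a proof. For the density of periodic points the paper likewise only needs a one-sided attracting fixed point with a possibly small basin, obtained from the random contraction of an arc $U_i$ into itself, not a global attractor.
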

Observe that the minimal IFSs mentioned in this theorem above are
not strongly-fibred, as $X_\omega=\mathbb{S}^1$, for every
$\omega\in\Sigma_k^+$. Thus, Theorem \ref{thmB} provides a new
condition under which a minimal IFS satisfies~\eqref{eq:1}.

While,
there are a variety of recent concrete examples  providing IFSs
satisfying the assumption of Theorem~\ref{thmB}
(\cite{BR13,KN04}), the theorem is in a way restrictive: only the
circle is discussed. The main place in the proof where this
assumption is needed is Antonov's theorem stated on the circle. We
try to overcome this limitation by providing examples of minimal
IFSs satisfying~\eqref{eq:1} directly. Namely, we prove
\eqref{eq:1} in general case for the IFSs containing a minimal
homeomorphism (see Proposition~\ref{pro:minimal-map}).

Our next goal is to build IFSs on the circle satisfying the
assumptions of Theorem~\ref{thmB} in a robust way. Here, the
robustness is understood  as the persistence of the minimality
under $C^1$-perturbations of the generators of the initial IFS.
There are various ways to construct such IFSs. For instance, it is
shown in~\cite{BR13} that every IFS generated by a pair of
diffeomorphisms $C^2$-close enough to rotations with no periodic
points in common and without periodic $ss$-intervals (compact
intervals whose endpoints are consecutive attracting periodic
points of different generators) is forward and backward minimal in
a robust way. However, the first examples of $C^1$-robustly
forward and backward minimal IFSs on the circle going back to
\cite{GI99,GI00}. These examples require that the IFS contains an
\emph{irrational rotation} (i.e., a  rigid rotation of the circle
with irrational rotation number) and a $C^1$-diffeomorphism $g$
with an attracting hyperbolic fixed point $a$ with derivative
$Dg(a)$ lying in the interval $(1/2,\,1)$. In our next result, we
generalize these kind of examples removing the extra assumptions
on he fixed point of~$g$.
\begin{mainthm}
\label{thmC} \emph{Let $g_1, g_2 \in
\mathrm{Diff}^1(\mathbb{S}^1)$ be, respectively, an irrational
rotation and an orientation-preserving diffeomorphism
which is not conjugate to a rotation.
Then,
there exists a $C^1$-neighborhood $\mathcal{U}$ of $(g_1,g_2)$ 
such that the IFS generated by any pair $(f_1,f_2)\in \mathcal{U}$
is forward and backward minimal.
Consequently, the following hold
\begin{itemize}
\item \emph{minimality of almost every orbital branch}:
$ \mathbb{S}^1= \overline{\mathcal{O}^+_\omega(x)}, \text{for
every}$
 $x\in \mathbb{S}^1$
and $\omega\in\Sigma_2^+$ with dense orbit under the Bernoulli shift map. \\[-0.3cm]
\item \emph{density of periodic points}:
the periodic points of $\IFS(f_1,f_2)$ are dense in
$\mathbb{S}^1$. Moreover, if the IFS has a hyperbolic
attracting/repelling
 periodic point then it has dense set of hyperbolic
attracting/repelling periodic points.
\end{itemize}}
\end{mainthm}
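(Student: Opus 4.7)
The plan is to establish the robust forward and backward minimality of $\IFS(f_1,f_2)$; both bulleted consequences then follow from Theorem~\ref{thmB} applied to this IFS, since $f_2$, being $C^1$-close to $g_2$, is also not conjugate to a rotation and therefore supplies the required non-rotation element.

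For the robust minimality itself, the first step is to extract from $g_2$ a \emph{strictly pushing} arc. Because $g_2$ has rational rotation number $p/q$ with $g_2^q\neq\mathrm{id}$, the complement $S^1\setminus\mathrm{Fix}(g_2^q)$ is nonempty and open; any connected component $I=(a,b)$ is bounded by fixed points of $g_2^q$, and in local oriented coordinates one has, say, $g_2^q>\mathrm{id}$ on $I$. On any sub-arc $I'\Subset I$ there is $\eta>0$ with $g_2^q(x)-x\geq\eta$, and this strict inequality is preserved under small $C^0$-perturbations. Consequently there exists a $C^1$-neighborhood $\mathcal U$ of $(g_1,g_2)$ in which every $f_2$ admits a fixed point $\tilde b$ of $f_2^q$ near $b$, and the iterates $f_2^{qn}$ push compact subsets of $I'$ into arbitrarily small neighborhoods of $\tilde b$. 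Meanwhile, since $g_1$ is the irrational rotation $R_\alpha$, the bound $\|f_1^m-R_{m\alpha}\|_{C^0}\le m\,\|f_1-g_1\|_{C^0}$, combined with the density of $\{m\alpha\bmod 1\}$ in $S^1$, gives an approximate rotation dynamics for $f_1$ on bounded time scales.

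The minimality argument then proceeds as follows: given any $x\in S^1$ and any nonempty open $V\subset S^1$, one constructs $h\in\IFS(f_1,f_2)$ of the form $h=f_1^{m_2}\circ f_2^{qN}\circ f_1^{m_1}$, where $m_1$ is chosen so that $f_1^{m_1}(x)\in I'$, $N$ is chosen so that $f_2^{qN}(I')$ lies in a prescribed small neighborhood of $\tilde b$, and $m_2$ is chosen so that $R_{m_2\alpha}(\tilde b)$ lies in $V$. The main obstacle is the tension between forcing $R_{m_2\alpha}(\tilde b)$ close to a target in $V$, which may require $m_2$ large, and the linear growth $m_2\|f_1-g_1\|_{C^0}$ of the $f_1$-rotation error for a fixed $\mathcal U$. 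I expect this is overcome by an iterative scheme alternating bounded $f_1$-rotations with strong $f_2^{qN}$-contractions, treating compositions $T_m=f_1^m\circ f_2^{qN}$ as one-shot contractions from $I'$ into small neighborhoods of $R_{m\alpha}(\tilde b)$ and combining several of them to access arbitrarily small $V$. Backward minimality then follows from the same argument applied to $(g_1^{-1},g_2^{-1})$, which satisfies the same hypotheses. Finally, when $g_2$ has hyperbolic attracting and repelling fixed points, the compositions $f_1^m\circ f_2^q$ (for $m$ in a range where $|Df_1^m|$ stays close to $1$) carry hyperbolic attracting (resp.\ repelling) fixed points close to $R_{m\alpha}(\tilde b)$, and varying $m$ yields the density of the hyperbolic attracting (resp.\ repelling) periodic points of $\IFS(f_1,f_2)$ in $S^1$.
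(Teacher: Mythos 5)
Your overall architecture matches the paper's: establish robust forward and backward minimality, then invoke Theorem~\ref{thmB} (and the remark following its proof) for the two bulleted consequences. But the core of the argument is left unresolved. You correctly identify the central tension --- for a fixed neighborhood $\mathcal U$ the error $\|f_1^m-R_{m\alpha}\|_{C^0}$ grows linearly in $m$, while hitting an arbitrarily small target $V$ via $R_{m\alpha}(\tilde b)$ requires arbitrarily large $m$ --- and then write ``I expect this is overcome by an iterative scheme\dots combining several of them to access arbitrarily small $V$.'' That sentence is the whole proof. For a fixed $\mathcal U$, the compositions $f_1^{m}\circ f_2^{qN}$ with $m$ in a bounded range only deliver points near a \emph{finite} set $\{R_{m\alpha}(\tilde b):m\le M\}$, which is $\delta(M)$-dense but not dense; and the full orbit $\{f_1^m(\tilde b):m\in\mathbb N\}$ need not be dense either, since $f_1$ may have rational rotation number. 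So there is no mechanism in your write-up for refining beyond resolution $\delta(M)$. The paper's resolution is a covering-plus-contraction (``blender-like'') condition: finitely many maps $h_i=g_1^{n_i}\circ g_2$ are chosen so that $\overline B\subset h_1(B)\cup\dots\cup h_k(B)$ with $Dh_i<\lambda<1$ on the relevant arc (this uses the one-sided \emph{basin of attraction} of a fixed point of $g_2$, with $0<Dg_2<1$ there --- your ``strictly pushing'' arc with $g_2^q>\mathrm{id}$ gives topological attraction but not the uniform derivative bound). This finite open condition self-replicates under iteration (Claim~\ref{lem1}), so the nested sets $h_{i_1}\circ\dots\circ h_{i_n}(B)$ shrink to any prescribed point of $\overline B$ (Lemma~\ref{p.motivacion}); combined with finitely many rotations covering $S^1$ forwards and backwards, minimality follows, and robustness is immediate because only finitely many strict inclusions and derivative inequalities are involved.

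The same gap affects your last claim: ``varying $m$ yields the density of the hyperbolic attracting (resp.\ repelling) periodic points'' fails for the reason above. The paper instead obtains density of (hyperbolic) periodic points as a consequence of the already-established minimality, by steering a small interval $J$ into the basin of an attracting fixed point and back into $J$ (Theorem~\ref{thmB} and Remark~\ref{rem}), so no direct control of $f_1^m$ for large $m$ is needed.
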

An immediate consequence is that every IFS with an irrational
rotation can be approximated by $C^1$-robustly forward and
backward minimal one. If the IFS contains a diffeomorphism which
is conjugate to an irrational rotation, one can conjugate the IFS
to one containing this irrational rotation. Unluckily, we cannot
use the previous theorem since the conjugacy map is not
differentiable in general.
However, the following result shows that, even in this case, the
IFS can be approximated by a $C^1$-robustly forward and backward
minimal one. The main tool applied here is the smooth conjugacy
for the circle diffeomorphisms provided in~\cite{BG12}.
\begin{maincor}
\label{corA} \emph{Let $g_1,\dots,g_k$ be $C^1$-diffeomorphisms of
the circle with $k\geq 2$ and suppose that there is a map in
$\IFS(g_1,\dots,g_k)$ with irrational rotation number. Then for
every $C^1$-neighborhood $\mathcal{U}$ of $(g_1,\dots,g_k)$ there
is $(f_1,\dots,f_k)\in \mathcal{U}$ such that the IFS generated by
these maps is $C^1$-robustly forward and backward minimal.}
\end{maincor}
\begin{proof}
Let $f$ be the $C^1$-diffeomorphisms in $\IFS(f_1,\dots,f_k)$ with
irrational rotation number $\alpha$. According to~\cite{BG12},
there exists a sequence of $C^1$-diffeomorphisms $h_n$ such that
$h_n \circ f \circ h_n^{-1}$ tends to the rotation $R_\alpha$ in
the $C^1$-topology as $n \to \infty$. Thus,  $F_n =h_n\circ
\IFS(f_1,\dots,f_k)\circ h^{-1}_n$ can be taken arbitrarily
$C^1$-close to an IFS containing $R_\alpha$. Then, if necessary,
by means of a small perturbation of the generators $f_1,\dots,f_k$
we get a semigroup $G_n$ arbitrarily close to $F_n$ satisfying
Theorem~\ref{thmC}. Consequently, the IFS given by $h_n^{-1}\circ
G_n \circ h_n$ is $C^1$-robustly forward and backward minimal and
arbitrarily close to $\IFS(f_1,\dots,f_k)$. This concludes the
proof of the result.
\end{proof}
Let us end this introduction by asking if every minimal IFS of
$C^1$-diffeo\-mor\-phisms, even on the circle, can be approximated
by $C^1$-robustly minimal one.

The proof of Theorem~\ref{thmB} is handled in Secion 2. Section 3
is devoted to the proof of Theorem~\ref{thmC} and some auxiliary
lemmas.


\section{Minimality of orbital branches: Proof of Theorem~\ref{thmB}}
\label{s:3} We first study the minimality of almost every orbital
branch in the special case that the IFS contains a minimal
homeomorphism. To do this, we need a bit of notation. Given a
finite word $\sigma=\sigma_1\dots\sigma_n$ in the alphabet
$\{1,\dots,k\}$ we denote by $|\sigma|$ the length of~$\sigma$.

\begin{proposition}
\label{pro:minimal-map} Let $f_1,\dots,f_k$  be homeomorphisms of
a compact metric space $X$. Assume that $\IFS(f_1,\dots,f_k)$
contains a minimal homeomorphism. Then there exists $\Omega
\subset \Sigma_k^+$ with $\mathbb{P}(\Omega)=1$ such that
$$
X= \overline{\mathcal{O}^+_\omega(x)}, \quad  \text{for every
$x\in X$ and $\omega \in \Omega$.}
$$
Moreover, $\Omega$ contains all the sequences with dense orbit
under the shift map.
\end{proposition}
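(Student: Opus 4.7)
Let $h \in \IFS(f_1,\dots,f_k)$ be the minimal homeomorphism, written as $h = f_{\sigma_n}\circ\cdots\circ f_{\sigma_1}$ for some finite word $\sigma=\sigma_1\dots\sigma_n$ in $\{1,\dots,k\}$. The plan is, first, to pin down a full-measure set $\Omega\subset\Sigma_k^+$ of sequences in which $\sigma$ appears repeated arbitrarily many times in a row, and then to exploit the minimality of $h$ together with compactness of $X$ to turn each such ``block'' of $\sigma$'s into a visit to any prescribed open set.

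\emph{Step 1 (subwords appear almost surely).} For a fixed finite word $\tau=\tau_1\dots\tau_m$, I use the defining property
$\mathbb{P}(\omega_j=i\mid \omega_{j-1},\dots,\omega_1)\geq p$
iteratively to conclude that, given any history, the probability that the next $m$ letters equal $\tau$ is at least $p^m$. Partitioning $\mathbb{N}$ into the consecutive disjoint blocks $I_\ell=\{(\ell-1)m+1,\dots,\ell m\}$, the conditional probability that $\tau$ fails to appear on $I_\ell$ given the past is at most $1-p^m$, so the probability that $\tau$ is missing from the first $L$ blocks is at most $(1-p^m)^L\to 0$. Hence a.s.\ $\tau$ appears as a subword of $\omega$, and by the same argument applied from any starting time, $\tau$ appears infinitely often a.s.

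\emph{Step 2 (the full-measure set $\Omega$).} Applying Step 1 to each word $\sigma^N$ (the concatenation of $N$ copies of $\sigma$) and intersecting over the countable family $N\in\mathbb{N}$, one obtains
$$
\Omega\eqdef\{\omega\in\Sigma_k^+: \text{for every } N\in\mathbb{N},\ \sigma^N \text{ appears as a subword of } \omega\},\qquad \mathbb{P}(\Omega)=1.
$$

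\emph{Step 3 (minimality of $h$ is uniform).} Fix any nonempty open set $U\subset X$. Since $h$ is a minimal homeomorphism, every $x\in X$ satisfies $h^m(x)\in U$ for some $m\geq 0$, so $\{h^{-m}(U):m\geq 0\}$ is an open cover of the compact space $X$. Extract a finite subcover to obtain an integer $N=N(U)$ such that for every $y\in X$ there exists $m\in\{0,1,\dots,N-1\}$ with $h^m(y)\in U$.

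\emph{Step 4 (conclusion).} Pick $\omega\in\Omega$, $x\in X$, and a nonempty open $U\subset X$. By definition of $\Omega$, there is some position $k\geq 0$ such that $\omega_{k+1}\dots\omega_{k+Nn}=\sigma^N$. Setting $y\eqdef f_\omega^k(x)$, the composition rule gives
$$
f_\omega^{k+jn}(x)=h^j(y)\quad\text{for } j=0,1,\dots,N.
$$
By Step 3, some $j\in\{0,\dots,N-1\}$ yields $h^j(y)\in U$, i.e.\ $f_\omega^{k+jn}(x)\in U$. Since $U$ and $x$ were arbitrary, $\mathcal{O}^+_\omega(x)$ is dense in $X$ for all $x\in X$ and all $\omega\in\Omega$.

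The conceptually delicate point is Step~1: the probability $\mathbb{P}$ need not be Bernoulli, so the events ``$\sigma^N$ occurs in a given window'' are not independent. I expect the main obstacle will be to present the Borel--Cantelli-type argument cleanly using only the conditional lower bound $p$; everything after that is a soft compactness-and-minimality argument. Otherwise the proof is essentially a direct chase of the definitions.
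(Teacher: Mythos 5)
Your proof is correct, but it is organized quite differently from the paper's. The paper fixes a point $x$ and a basic open set $B$ and runs an \emph{adaptive} stopping-time argument: at each stage it looks at where the orbit currently sits, uses minimality of $h^{-1}$ and compactness to choose how many copies of the word $\alpha$ coding $h$ must be inserted to push that particular point into $B$ (a number bounded by a uniform $\ell$), and deduces $\mathbb{P}(\Omega(x,B))=1$ from the geometric bound $(1-p^\ell)^{1+[n/\ell]}$. It then has to remove the dependence of $\Omega(x,B)$ on $x$ by a continuity-plus-compactness step and on $B$ by countability of a base. You instead build a single full-measure set $\Omega$ up front --- the sequences containing $\sigma^N$ as a subword for every $N$ --- using only the soft conditional Borel--Cantelli estimate, and then handle all $x$ and all open $U$ at once by the purely deterministic observation that minimality of $h$ is uniform on a compact space ($X=\bigcup_{m=0}^{N-1}h^{-m}(U)$ for some $N=N(U)$). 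Your route is more modular and sidesteps the paper's most delicate point (the neighborhood $V_x$ in the paper's continuity step a priori depends on $\omega$ and on the hitting time $n$, so that passage is glossed over there); it also yields a clean deterministic description of $\Omega$, in the same spirit as the ``dense orbit under the shift'' condition appearing in Theorem~\ref{thmB}. What the paper's adaptive version buys in exchange is the explicit exponential tail bound for the hitting time of $B$, and it serves as the template reused later in the proof of Theorem~\ref{thmB}. Your Step~1 is the point you rightly flag as needing care, but the tower-property argument you sketch (conditional probability at least $p^{|\tau|}$ of seeing $\tau$ on each block, hence failure probability at most $(1-p^{|\tau|})^L$ over $L$ disjoint blocks) is exactly the standard way to handle the non-Bernoulli case and is sound.
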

\begin{proof}
By the compactness of $X$, there is a countable open base
$\mathcal{B}$. Let $B$ be an element of $\mathcal{B}$ and fix
$x\in X$.
\begin{claim}
\label{clm1} There is $\Omega(B)\subset \Sigma_k^+$, with
$\mathbb{P}(\Omega(B))=1$, such that for every $\omega\in
\Omega(B)$,
\begin{equation}
\label{eq:provar-esto} f^n_\omega(x)\in B, \quad \text{for some
$n\geq 1$}.
\end{equation}
Moreover, $\Omega(B)$ contains all the sequences with dense orbit
under the shift map.
\end{claim}
By doing this, the proof of the proposition can be derived
from the countability of $\mathcal{B}$. Indeed, it follows that
$$
   \Omega = \bigcap_{B \in \mathcal{B}} \Omega(B)
$$
has full $\mathbb{P}$-measure, contains all the sequences with
dense orbit under the shift map and it holds that for every
$\omega \in \Omega$ and $B \in \mathcal{B}$,
$$
\mathcal{O}_\omega^+(x) \cap B \not = \emptyset \quad \text{for
every $x\in X$.}
$$
This completes the proof of the proposition.
\end{proof}
\begin{proof}[Proof of Claim \ref{clm1}]
We first provide $\ell\in \mathbb{N}$ and a word $\sigma$ with
$|\sigma|=\ell$ in such a way that for every
$\omega\in\mathcal{C}_{\sigma}$ and $x\in X$,
\begin{equation}
\label{eq:1step} f^t_{\omega} (x)\in B \quad \text{for some
$0<t\leq \ell$,}
\end{equation}
where $\mathcal{C}_\sigma$ denotes the cylinder in $\Sigma_k^+$
around the finite word $\sigma=\sigma_1\dots\sigma_\ell$. Let
$h=f_{\alpha_s}\circ\dots\circ f_{\alpha_1}$ be the minimal
homeomorphism in $\IFS(f_1,\ldots,f_k)$. Observe that the
minimality $h$ is equivalent to that of $h^{-1}$, so
$$
X = \bigcup_{i=1}^r  h^{-i}(B) \quad \text{for some $r\in
\mathbb{N}$.}
$$
Hence, for each $x\in X$, there is $ i\in \{1,\dots,r\}$ with $
h^i(x)\in B. $ Put $\ell=rs$. Thus, defining
$\sigma=\alpha\dots\alpha$ where $\alpha=\alpha_1\dots\alpha_s$
appears $r$-times
one can conclude~\eqref{eq:1step}.

Continuing the proof of Claim \ref{clm1}, let $\Gamma(B)$ be the
set of elements $\omega\in\Sigma_k^+$ for which there exists $x\in
X$ such that $f^j_\omega(x)\not\in B$, for all $j\in\mathbb{N}$.
Observe that
\begin{equation}
\label{eq:2step} \Gamma(B)\subset \bigcap_{n=1}^\infty
\Gamma(B,n),
\end{equation}
where $ \Gamma(B,n)=\{\omega: \exists\, x\in X \ \text{such that}
\  f ^j_\omega(x)\not\in B  \ \text{for every} \ j\leq n+\ell\}$.
Equations~\eqref{eq:1step} and~\eqref{eq:2step} imply that
\begin{align*}
\mathbb{P}(\Gamma(B)) &\leq
\mathbb{P}(\Gamma(B,n)) \\
&\leq (1-p^\ell) \cdot \mathbb{P}(\Gamma(B,n-\ell)) \\
&\leq (1-p^\ell)^{1+[n/\ell]} \to 0 \quad \text{when $n\to
\infty$}.
\end{align*}
Thus, taking $\Omega(B)=\Sigma_k^+\setminus \Gamma(B)$, one gets
$\mathbb{P}(\Omega(B))=1$ which implies ~\eqref{eq:provar-esto}.
Observe that by~\eqref{eq:1step} any sequence with dense orbit
under the shift map belongs to $\Omega(B)$.  This concludes the
proof of Claim \ref{clm1}.
\end{proof}
The next theorem establishing the synchronization phenomenon is
the key ingredient to prove Theorem~\ref{thmB}. We denote by $\nu$
any Bernoulli measure on $\Sigma_k^+$.
\begin{theorem}[Antonov~\cite{An84}]
\label{thm:Antonov}Let $f_1,\dots,f_k$ be orientation-preserving
homeomorphisms of the circle such that the IFS generated by them
is forward and backward minimal. Then exactly one of the following
statements holds:
\begin{enumerate}
\item \label{item11} there exists a common invariant measure of all the $f_i$'s, and all these maps are simultaneously conjugate to rotations;
\item\label{item22} for any two points $x,y\in \mathbb{S}^1$, there exists
$\Omega=\Omega(x,y)\subset \Sigma_k^+$ with $\nu(\Omega)=1$ such
that
    $$
      \lim_{n\to\infty} d(f^n_\omega(x),f^n_\omega(y))=0, \quad \text{for every $\omega \in \Omega$};
    $$
\item \label{item33} there exists an integer $\ell>1$ and an order
$\ell$ orientation-preserving homeomorphism $\varphi$, such that
it commutes with all the $f_i$'s, and after passing to the
quotient circle
$$\mathbb{S}^1 /(\varphi^i(x)\sim \varphi^j(x))$$
the conclusion of~\eqref{item22} are satisfied, for the new maps
$g_i$.
\end{enumerate}
\end{theorem}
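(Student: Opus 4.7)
The plan is to follow Antonov's original approach, which rests on analyzing the stationary probability measures of the induced Markov chain on $S^1$ and their pullbacks along typical random histories. By Krylov--Bogolyubov there exists a stationary Borel probability measure $\mu$ on $S^1$, meaning $\mu = \sum_{i=1}^k p_i (f_i)_*\mu$, and forward minimality forces $\mathrm{supp}(\mu) = S^1$. The reverse-time compositions $\hat{f}_\omega^n = f_{\omega_1} \circ \dots \circ f_{\omega_n}$ produce a measure-valued martingale $(\hat{f}_\omega^n)_*\mu$, which converges weakly for $\nu$-a.e.\ $\omega$ to a random probability measure $\mu_\omega$ satisfying $\mu = \int \mu_\omega\, d\nu(\omega)$.

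The trichotomy then comes from a case analysis on the atomic structure of $\mu_\omega$. Let $\ell(\omega)$ be the number of atoms of maximal mass of $\mu_\omega$, set to $\infty$ when $\mu_\omega$ is non-atomic; ergodicity of the Bernoulli shift makes $\ell$ almost surely constant. I would first show that orientation-preservation, together with the monotone structure of the martingale on a one-dimensional ordered space, excludes any intermediate behaviour: either $\mu_\omega$ is non-atomic (in which case a $0$--$1$ argument forces $\mu_\omega \equiv \mu$ deterministically), or $\mu_\omega$ is purely atomic with exactly $\ell$ atoms of common weight $1/\ell$.

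In the non-atomic case, the identity $\mu_\omega \equiv \mu$ upgrades stationarity to $f_i$-invariance of $\mu$ for each $i$ individually; since $\mu$ has full support and no atoms, its cumulative distribution function provides a simultaneous conjugacy of the $f_i$ to rotations, giving~(\ref{item11}). If $\ell = 1$, then $\mu_\omega = \delta_{p(\omega)}$, and weak convergence of $(\hat{f}_\omega^n)_*\mu$ to a Dirac mass yields $d(\hat{f}_\omega^n(x), \hat{f}_\omega^n(y)) \to 0$ for $\mu$-almost every pair; a stationarity/Fubini argument transfers this to all pairs and to the forward compositions $f_\omega^n$, giving~(\ref{item22}). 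If $1 < \ell < \infty$, the $\ell$ atoms $\{x_1(\omega), \dots, x_\ell(\omega)\}$ lie in a fixed cyclic order on $S^1$ by monotonicity; the orientation-preserving homeomorphism $\varphi$ that cyclically permutes these atoms and interpolates monotonically on the complementary arcs has order $\ell$, and equivariance of the pullback construction forces $\varphi \circ f_i = f_i \circ \varphi$ for every $i$. The induced IFS on $S^1 / \langle \varphi \rangle$ then falls under the case $\ell = 1$ already handled, producing~(\ref{item33}).

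The main obstacle will be the rigidity step: proving that in the atomic case all of $\mu_\omega$'s mass is carried by exactly $\ell$ equally weighted atoms, and that in the non-atomic case $\mu_\omega$ is almost surely deterministic and hence $f_i$-invariant. Both points rest on a delicate interplay between the martingale convergence, the cyclic-order preservation of orientation-preserving circle homeomorphisms, and backward minimality, which is precisely what allows atomic structure (or its absence) to propagate across the whole circle. Once this rigidity is in hand, constructing $\varphi$ and verifying its commutation with each $f_i$ is essentially bookkeeping.
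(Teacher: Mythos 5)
First, a point of reference: the paper does not prove this statement at all. It is quoted as Antonov's theorem with a citation to \cite{An84}, and the only related argument in the paper is the derivation of Lemma~\ref{lem:Antonov} from the addendum of \cite{GGKV13}. So there is no in-paper proof to compare against; your proposal has to stand on its own. The skeleton you describe --- a stationary measure $\mu$ from Krylov--Bogolyubov, the measure-valued martingale $(\hat f^n_\omega)_*\mu$ along reversed compositions, a.s.\ weak-$*$ convergence to a random measure $\mu_\omega$, and a trichotomy read off from the atomic structure of $\mu_\omega$ --- is indeed the standard modern route to this result (it is essentially the strategy in the literature descending from Antonov, Kleptsyn--Nalskii and \cite{GGKV13}), so the architecture is sound.

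However, as a proof the proposal has two genuine gaps. The first you name yourself but do not fill: the entire content of the theorem sits in the rigidity step, namely that $\mu_\omega$ is either a.s.\ non-atomic and deterministic (whence $(f_i)_*\mu=\mu$ for each $i$ and the conjugacy to rotations via the distribution function of $\mu$) or a.s.\ a sum of exactly $\ell$ atoms of mass $1/\ell$. This is not a dichotomy that follows from ergodicity of the shift plus ``monotone structure''; it is precisely where backward minimality and the cyclic order of the circle must be used (e.g.\ to show that the set of atoms of maximal mass carries all of the mass, and that a non-atomic random limit forces invariance of $\mu$ under each generator rather than mere stationarity). Deferring this as ``the main obstacle'' means the proposal is an outline, not a proof. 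The second gap is the phrase ``a stationarity/Fubini argument transfers this \dots\ to the forward compositions $f^n_\omega$.'' The martingale convergence gives a.s.\ statements about the reversed compositions $\hat f^n_\omega=f_{\omega_1}\circ\cdots\circ f_{\omega_n}$; the conclusion~\eqref{item22} concerns $f^n_\omega=f_{\omega_n}\circ\cdots\circ f_{\omega_1}$. These agree in distribution for each fixed $n$ but are entirely different as processes, and Fubini alone does not convert ``contraction in probability'' for the forward words into the a.s.\ convergence $d(f^n_\omega(x),f^n_\omega(y))\to 0$ demanded by the statement; one needs an additional argument (a stopping-time/Borel--Cantelli or two-point Markov chain argument). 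Finally, the claim that conclusion~\eqref{item22} holds for \emph{all} pairs $x,y$, not just $\mu\otimes\mu$-a.e.\ pair, needs the monotonicity-plus-full-support argument spelled out rather than asserted. Until these three points are carried out, the trichotomy is plausible but not established.
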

The next lemma which is essentially consequence of Antonov's
theorem plays a key role in proving Theorem \ref{thmB}. Our
arguments rely upon
the same basic strategy as those of~\cite
{GGKV13}.
\begin{lemma}
\label{lem:Antonov} Under the assumptions of
Theorem~\ref{thm:Antonov}, if there is a homeomorphism in
$\IFS(f_1,\dots,f_k)$ which is not conjugate to a rotation then
there exists an integer $\ell\geq 1$ such that for $\nu$-almost
every $\omega\in \Sigma_k^+$ there are points $r_i=r_i(\omega)\in
\mathbb{S}^1$, for $i=1,\dots,\ell$, such that
$$
  \lim_{n\to\infty} d(f^n_\omega(x),f^n_\omega(y))=0, \quad \text{for every $x,y\in U$.}
$$
where $U$ is any connected component of
$\mathbb{S}^1\setminus\{r_1,\dots,r_\ell\}$.
\end{lemma}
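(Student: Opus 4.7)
The plan is to invoke Antonov's trichotomy (Theorem~\ref{thm:Antonov}) and dispose of each alternative. Case~\eqref{item11} is immediately ruled out by hypothesis: if all the $f_i$'s were simultaneously conjugate to rotations via a common conjugating homeomorphism $h$, then $h^{-1}\circ g\circ h$ would be a composition of rotations (hence a rotation) for every $g\in\IFS(f_1,\dots,f_k)$, contradicting the existence of a non-conjugate-to-a-rotation element in the IFS. So we are in case~\eqref{item22} (which will correspond to $\ell=1$) or case~\eqref{item33}.

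For case~\eqref{item22}, fix a countable dense set $D\subset S^1$. Antonov provides, for each pair $(x,y)\in D\times D$, a full-measure set $\Omega(x,y)$ on which $d(f^n_\omega(x),f^n_\omega(y))\to 0$. Intersecting the countably many $\Omega(x,y)$ yields a full-measure set $\Omega_0$ on which all pairs in $D$ synchronize simultaneously. The remaining task is to upgrade this pointwise synchronization on a dense set to the existence of a single exceptional point $r(\omega)\in S^1$ such that $f^n_\omega$ is uniformly contracting on compact subsets of $S^1\setminus\{r(\omega)\}$. This is the content of~\cite[Addendum~1]{GGKV13}: for sequences of orientation-preserving circle homeomorphisms, preservation of cyclic order (monotonicity of lifts to $\mathbb{R}$) forces all the ``non-degeneracy'' of a synchronized dense set to concentrate at a single point. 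Granting this, we take $\ell=1$ and $r_1(\omega)=r(\omega)$.

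For case~\eqref{item33}, the commuting order-$\ell$ homeomorphism $\varphi$ descends to a quotient circle $\bar S^1=S^1/\langle\varphi\rangle$, on which the induced IFS $\bar f_1,\dots,\bar f_k$ falls under case~\eqref{item22}. Applying the previous paragraph downstairs yields, for $\nu$-almost every $\omega$, a single exceptional point $\bar r(\omega)\in\bar S^1$ with contraction on the complementary arc. Lifting $\bar r(\omega)$ back to $S^1$ produces the full $\varphi$-orbit $\{r_1(\omega),\dots,r_\ell(\omega)\}$, whose complement is the disjoint union of $\ell$ arcs, each projecting homeomorphically onto $\bar S^1\setminus\{\bar r(\omega)\}$. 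For $x,y$ lying in the same arc $U$, the image $f^n_\omega(U)$ is connected and, because $f^n_\omega\circ\varphi=\varphi\circ f^n_\omega$, lies entirely in one of the $\ell$ arcs of $S^1\setminus\{f^n_\omega(r_1),\dots,f^n_\omega(r_\ell)\}$. Since the projection $\pi:S^1\to\bar S^1$ is a local isometry, once $d_{\bar S^1}(\bar f^n_\omega(\bar x),\bar f^n_\omega(\bar y))$ becomes small, it equals $d_{S^1}(f^n_\omega(x),f^n_\omega(y))$, so the downstairs contraction lifts.

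The main obstacle is the upgrade in case~\eqref{item22} from ``pointwise synchronization of a dense set'' to ``uniform contraction off a single exceptional point''. This is the only genuinely non-trivial step, resting on the rigidity imposed by monotonicity of orientation-preserving circle homeomorphisms, and we simply invoke~\cite[Addendum~1]{GGKV13} for it. The rest, including the case~\eqref{item33} reduction, is a straightforward covering-space bookkeeping argument.
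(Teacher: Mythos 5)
Your proof is correct and follows the same skeleton as the paper's: exclude case~\eqref{item11} by the hypothesis, produce a single exceptional point in case~\eqref{item22} (so $\ell=1$ there), and reduce case~\eqref{item33} to case~\eqref{item22} on the quotient circle $S^1/\varphi$, lifting the exceptional point to its $\varphi$-orbit. The one substantive difference is how the exceptional point is obtained in case~\eqref{item22}: you black-box this step by citing~\cite[Addendum~1]{GGKV13}, which the paper itself acknowledges is sufficient (``this lemma follows straightforward from [Addendum~1, GGKV13]''), but the paper also supplies a short self-contained argument that you miss. Namely: for each arc $I$ with rational endpoints, synchronization of its two endpoints forces either $|f^n_\omega(I)|\to 0$ or $|f^n_\omega(S^1\setminus I)|\to 0$; intersecting over a countable family of partitions $\mathcal{P}_m$ with mesh tending to zero, and using that a homeomorphism preserves the total length of the circle, exactly one arc $I_m(\omega)$ of each $\mathcal{P}_m$ survives (its images tend to full length), and these arcs nest down to the point $r(\omega)$. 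If you want the proof to be self-contained, that is the argument to include. Two minor imprecisions in your case~\eqref{item33}: the projection $\pi:S^1\to S^1/\varphi$ is a degree-$\ell$ covering but not literally a local isometry unless $\varphi$ is a rotation, so the correct justification is that the preimage under $\pi$ of a sequence of arcs shrinking to a point splits into $\ell$ components each shrinking to one of the $\ell$ preimage points; and the commutation $f^n_\omega\circ\varphi=\varphi\circ f^n_\omega$ you use is exactly what makes the induced maps on the quotient well defined, which is where it is really needed.
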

\begin{proof}
Assume that we are in the case~\eqref{item22} of
Theorem~\ref{thm:Antonov}. For any arc $I\subset \mathbb{S}^1$
there is a subset $\Omega(I)\subset \Sigma^+_k$ with
$\nu(\Omega(I))=1$ such that either the length of arc
$f^n_\omega(I)$ or its complement
$f_\omega^n(\mathbb{S}^1\setminus I)$ tends to 0 for every
$\omega\in \Omega(I)$. Consider a sequences of finer partitions
$\mathcal{P}_m$ of the circle into closed arcs with rational
endpoints whose length goes to zero. Since we have a numerable
number of the arcs arcs
$$
\Omega = \bigcap_{m\in\mathbb{N}} \bigcap_{I\in \mathcal{P}_m}
\Omega(I)\subset   \Sigma_k^+
$$
satisfies $\nu(\Omega)=1$. For each $\omega\in \Omega$, as the
total length of the circle is preserved, there is exactly one arc
$I_m=I_m(\omega)$ of the partition $\mathcal{P}_m$ such that the
length of $f^n_\omega(I_m)$ tends to 1 and the length of the
images of its complement tends to 0. These closed arcs $I_m$,
$m\in \mathbb{N}$ form a nested sequence whose length goes 0, and
so there exists a unique intersection point $r(\omega)\in
\mathbb{S}^1$. Therefore, for every two points $x,y \in
\mathbb{S}^1\setminus \{r(\omega)\}$, here is a natural number $m$
such that $x,y \in \mathbb{S}^1\setminus I_m$, and hence
$$
\lim_{n\to\infty}d(f^n_\omega(x),f^n_\omega(y))=0.
$$
Assume now that we are in the case~\eqref{item33}. Passing to the
quotient space $\mathbb{S}^1 / \varphi$ and arguing as above for
quotient dynamics $g_1,\dots,g_k$, the same behavior follows. That
is, there exists $\Omega\subset \Sigma_k^+$ with $\nu(\Omega)=1$
such that for every $\omega \in \Omega$ there is a point
$r=r(\omega)\in \mathbb{S}^1/\varphi$ with the property that the
length of any arc which does not contain  $r$ tends to 0 by
iteration of the orbital branch $g^n_\omega$. Since $\varphi$ is
an order $\ell>1$ orientation-preserving homeomorphism, the
equivalence class $r$ has exactly $\ell$ different representative
$r_i=r_i(\omega)\in \mathbb{S}^1$, for $i=1,\dots,\ell$. Moreover,
$\varphi^i(r_1)=r_i$, for $i=1,\dots,\ell$.

Let $U$ be a connected component of
$\mathbb{S}^1\setminus\{r_1,\dots,r_\ell\}$. Consider $x,y \in U$
and denote by $[x,y]$ the arc with endpoint $x$ and $y$ containing
in $U$. Observe that $r\not\in \pi([x,y])$ where
$\pi:\mathbb{S}^1\to \mathbb{S}^1/\varphi$ is the projection on
the quotient space. Hence, the length of this arc $\pi([x,y])$ in
the $\mathbb{S}^1/\varphi$ goes to zero by iteration $g^n_\omega$.
This means, in particular, that
$$
\lim_{n\to \infty} d(f^n_\omega(x),f^n_\omega(y))=0.
$$
This concludes the proof of the lemma.
\end{proof}

Now, we will prove Theorem~\ref{thmB}. Denoting by $|I|$ the
length
of an arc $I \subset \mathbb{S}^1$. 
\begin{proof}[Proof of Theorem~\ref{thmB}]
Suppose that we are in the case~\eqref{item11} of Antonov's
theorem. Then, there is a minimal homeomorphism among the
generator $f_1,\dots,f_k$ of the IFS. Otherwise, the $f_i$'s being
simultaneously conjugate to rational rotations. Hence, the group
generated by $f_1,\dots,f_k$ is finite and it cannot act minimally
on the circle. Proposition~\ref{pro:minimal-map} implies the
minimality of almost every orbital branch in this case.

Now, assume that there exists a homeomorphism  in
$\IFS(f_1,\dots,f_k)$ which is not conjugate to a rotation. By
Lemma~\ref{lem:Antonov}, there exists an integer $\ell\geq 1$ and
a subset $\Sigma \subset \Sigma_k^+$ with $\nu(\Sigma)=1$ such
that for every $\omega\in\Sigma$ there are points
$r_i=r_i(\omega)\in \mathbb{S}^1$, for $i=1,\dots,\ell$, such that
for any connected component $U$ of
$\mathbb{S}^1\setminus\{r_1,\dots,r_\ell\}$,  
\begin{equation}
\label{eq:as}
  \lim_{n\to\infty} d(f^n_\omega(x),f^n_\omega(y))=0, \quad \text{for every $x,y\in U$.}
\end{equation}
Let $\vartheta \in \Sigma_k^+$ has dense orbit under the Bernoulli
shift map and consider a point $x$ and an open set $I$ in
$\mathbb{S}^1$. We want to see that $
\mathcal{O}^+_\vartheta(x)\cap I\not=\emptyset$.
\begin{claim}
\label{clm2}There exists a finite word $\tau$  such that for each
$z\in \mathbb{S}^1$, there is $0\leq t=t(z) \leq |\tau|$ with the
property that $f_{\tau_t}\circ\dots\circ f_{\tau_1}(z)\in I$.
\end{claim}
By doing this, using the fact that the orbit of $\vartheta$ is
dense under the Bernoulli shift map $\sigma$, one can choose
$n\geq 1$ such that $[\sigma^{n}(\vartheta)]_i=\tau_i$, for
$i=1,\dots,|\tau|$. Taking $z=f_\vartheta^{n}(x)$ one has that
$$
f^{n+t(z)}_\vartheta(x)\in I.
$$
which proves the minimality of the orbital branch along
$\vartheta$. Since the set of all elements of $\Sigma_k^+$ with
dense orbit has full $\mathbb{P}$-probability, one can conclude
the minimality of almost every orbital branch. So, the proof of
Theorem~\ref{thmB} is done in the first part.
\begin{proof}[Proof of Claim~\ref{clm2}] Let $D$ be
a countable dense subset of $\mathbb{S}^1$. By~\cite{BV11}, there
is $\Omega(q) \subset \Sigma_k^+$ with $\nu(\Omega(q))=1$ such
that $\mathbb{S}^1=\overline{\mathcal{O}^+_\omega(q)}$, for every
$\omega \in \Omega(q)$. Consider
$$
\Omega=\bigcap_{q\in D} \Omega(q) \cap \Sigma.
$$
Observe that $\nu(\Omega)=1$ and $\mathcal{O}_\omega^+(q)$ is
dense in $\mathbb{S}^1$, for every $q\in D$ and every $\omega \in
\Omega$. Fix $\omega \in \Omega$. By~\eqref{eq:as}, for  every
$\varepsilon>0$, there exists $K=K(\omega,\varepsilon) \in
\mathbb{N}$ such that
\begin{equation}
\label{eq:e1}
  |f^n_\omega(U_i)|< \varepsilon, \quad \text{for every $n\geq K$ and
  $i=1,\dots,\ell$},
\end{equation}
where the arcs $U_i=U_i(\omega,\varepsilon)\subset \mathbb{S}^1$
are the connected components of the set
$$\mathbb{S}^1\setminus
(B_{\varepsilon}(r_1)\cup\dots\cup B_\varepsilon(r_\ell)). $$
Here,
$B_{r}(x)$ denotes the open ball of radius $r>0$ centered at the
point $x$.

In view of the backward minimality, one can recursively find maps
$h_1, \dots, h_\ell$ in $\IFS(f_1,\dots,f_k)$ such that
$h^{-1}_i\circ\dots \circ h^{-1}_1 (r_i)\in I$, for every
$i=1,\dots,\ell$. Now, consider $\varepsilon>0$ in such a way that
$$
   h_i^{-1}\dots \circ h^{-1}_1(B_\varepsilon(r_i)) \subset I, \quad \text{for every $i=1,\dots,\ell$.}
$$
Equation~\eqref{eq:e1} implies that by iteration of $f^n_\omega$
the arcs $U_i$ are contracted. Since each of these arcs has points
of $D$ and $\omega \in \Omega$, one can move these arcs around the
circle to any open set. In particular, there exist non-negative
integer numbers $n_1=n_1(\omega), \dots, n_\ell=n_\ell(\omega)$
with $n_1\geq K(\omega,\varepsilon)$ so that
$$
   f^{n_1+\dots+n_i}_\omega(U_i) \subset I, \quad \text{for every $i=1,\dots,\ell$.}
$$
Let $\gamma^i$ be the words corresponding to the maps $h_i$ for
$i=1,\dots,\ell$. Put
$$
\tau=\gamma^\ell\dots\gamma^1\omega_1\dots\omega_{n_1+\dots+n_\ell}.
$$
The word $\tau$ is the desired one in Claim~\ref{clm2}. To show
this it is sufficient to show that for any $z\in \mathbb{S}^1$
there is $0\leq t \leq |\tau|$ such that $
     f_{\tau_t} \circ\dots \circ f_{\tau_1}(z)\in I.
$
Consider the point
$$
y=h_1\circ \dots \circ
h_\ell(z)=f_{\tau_s}\circ\dots \circ f_{\tau_1}(z),
$$
where $s=|\gamma^1|+\dots+|\gamma^\ell|$. We have three cases:\\
\begin{itemize}
\item If $y\in U_1\cup \dots \cup U_\ell$,
then $f^{n_1+\dots+n_i}_\omega(y) \in I$,  for some $i\in
\{1,\dots,\ell\}$.
\item If $y\in B_\varepsilon(r_i)$ with $i\in\{1,\dots,\ell-1\}$, then
$$
    \hspace{1.2cm}h_{i+1}\circ\dots \circ h_\ell(z)=h^{-1}_{i}\circ \dots \circ h_1^{-1}(y)
    \in h^{-1}_{i}\circ \dots \circ h_1^{-1}(B_\varepsilon(r_i))\subset I.
$$
\item If $y\in B_\varepsilon(r_\ell)$, then
$z=h^{-1}_{\ell}\circ \dots \circ h_1^{-1}(y) \in
h^{-1}_{\ell}\circ \dots \circ h_1^{-1}(B_\varepsilon(r_\ell))
\subset I$.
\end{itemize}
In any case, one has that $f_{\sigma_t} \circ \dots \circ
f_{\sigma_1}(z)\in I$ for some $0\leq t \leq |\sigma|$ as we want.
\end{proof}

Continuing the proof of Theorem~\ref{thmB}, we focus on the second
part, which is  the density of periodic points. Let $J\subset
\mathbb{S}^1$ be any arbitrary small open interval. We will show
that there exists a periodic point of $\IFS(f_1,\dots,f_k)$ in
$J$.
\begin{claim}
\label{clm3}There is a fixed point $a$ of a map $g$ in
$\IFS(f_1,\dots,f_k)$ such that at least for one-side $a$ becomes
as an attracting point of $g$.
\end{claim}
\begin{proof}
We use the  notation applied in the proof of the first part.
By~\eqref{eq:e1} and the density of $D$ in $\mathbb{S}^1$, for
every $\omega\in\Omega$ and every $i\in\{1,\dots,\ell\}$, there is
a sufficiently large number $n$ such that $f^n_\omega(U_i)\subset
U_i$ and $|f^n_\omega(U_i)|<|U_i|$. By Brouwer's fixed-point
theorem, the mapping $g=f^n_\omega$ has a fixed point $a$ in $U_i$
satisfying the required attracting property.
\end{proof}
Forward minimality allows us to find $F\in \IFS(f_1,\dots,f_k)$
such that $F(J)\cap B$ has non-empty interior where $B$ is the
basin of attraction of $a$ for $g$. Let $V\subset J$ be a
non-degenerated closed arc such that $F(V) \subset B$. Again by
the forward minimality of the IFS, there is $G\in
\IFS(f_1,\dots,f_k)$ such that $G(a)$ belongs to the interior of
$V$. By the continuity of $G$, there is $\delta>0$ such that
$G((a-\delta,\,a+\delta))$ is contained in $V$. Now, since $F(V)$
is contained in the basin of $a$, there is $m\geq 0$ such that
$g^{m}\circ F(V) \subset (a-\delta,\,a+\delta)$ and so, $G\circ
g^{m}\circ F(V)\subset V$. By Brouwer's fixed-point theorem,
$G\circ g^m\circ F$ has a fixed point in $V\subset J$. This
implies the desired density of periodic points of
$\IFS(f_1,\ldots,f_k)$ and eventually ends the proof of the second
part of Theorem~\ref{thmB}.
\end{proof}
\begin{remark}
\label{rem} The proof of the second part of Theorem~\ref{thmB} can
be improved provided the generators $f_1,\dots,f_k$ are
$C^1$-diffeomorphisms and there is a hyperbolic attracting fixed
point of some map in $\IFS(f_1,\dots,f_k)$. This improvement
involves taking a large iteration $g^m$ in such a way that $G\circ
g^m\circ F$ is contracting on $V$. After that, one uses Banach's
fixed-point theorem concluding the existence of a unique
hyperbolic attracting fixed point of $G\circ g^m\circ F$ in $V$.
In fact, we obtain something more which is the density of
hyperbolic attracting periodic points of the IFS. Similar argument
also runs to the backward IFS getting the same result for
repellers.
\end{remark}
\section{Robust minimal IFSs: Proof of Theorem~\ref{thmC}}
\label{s:4}
%
Let $g_1, g_2 \in \mathrm{Diff}^1(\mathbb{S}^1)$ be, respectively,
the irrational rotation $x\to x+\alpha$ and an
orientation-preserving diffeomorphism which is not conjugate to a
rotation. This implies that $\IFS(g_1,g_2)$ is not conjugate to a
semigroup of rotations. As we have shown in the proof of
Theorem~\ref{thmB}, in this case, there exists $g \in
\IFS(g_1,g_2)$ with a fixed point $a$ such that at least for
one-side $a$ becomes an attracting point of $g$. Without lose of
generality, we suppose that $g_2=g$ and $A=(a,a+\varepsilon)$ is
its local basin of attraction, i.e., $0<Dg_2(x)<1$, for every
$x\in A$. Consider $\delta>0$ such that
$|\delta-\varepsilon|>|a-g_2(a+\varepsilon)|$ and set
\begin{equation}
\label{intBD}
B=\big(g^2_2(a+\varepsilon),\,g_2(a+\varepsilon)\big) \quad
\text{and} \quad D=\big(a,\,g_2(a+\varepsilon)\big).
\end{equation}
Observe that since $g_1$ is an irrational rotation, there exist
$n_1,\dots,n_k\in \mathbb{N}$ such that for $h_i=g_1^{n_i}\circ
g_2$ it holds
\begin{enumerate}
\item \label{eq:blender1}
$\overline{B} \subset h_1(B) \cup \dots \cup h_k(B)$,
\item \label{eq:blender2}
$g_1^{n_i}(\overline{D}) \subset (a+\delta,\,a+\varepsilon)$, for
every $i=1,\dots,k$,
\item \label{eq:blender3}
there is $\lambda<1$ such that $Dh_i(x)<\lambda$, for every $x\in
(a+\delta,a+\varepsilon)$ and $i=1,\dots,k$.
\end{enumerate}
The third term holds by the equality $Dg_1^{n_i}=1$.

Using the mapping $g_1$, one can cover the whole circle by
finitely many iterations of $B$. That is, there exist times $m_1,
\ldots,m_s$ and $\tilde{m}_1, \ldots,\tilde{m}_r$ such that for
$T_i=g_1^{m_i}$ and $S_i =g_1^{\tilde{m}_i}$ it holds
\begin{equation}
\label{eq:cover}
   \mathbb{S}^1=\bigcup_{i=1}^s T_i(B)=\bigcup_{i=1}^r S^{-1}_i(B).
\end{equation}
Now, we shall prove that conditions ~\eqref{eq:blender1},
\eqref{eq:blender2}, \eqref{eq:blender3} and~\eqref{eq:cover}
imply that the IFS generated by $g_1$ and $g_2$ is $C^1$-robustly
minimal. Namely, we want to prove that for every $\Phi=(f_1,f_2)$
close enough to $(g_1,g_2)$ in the $C^1$-topology,
$$
  \mathbb{S}^1=\overline{\mathrm{Orb}^+_\Phi(x)},
  \quad \text{for every $x\in \mathbb{S}^1$}
$$
where, as before, $\mathrm{Orb}_\Phi^{+}(x)=\{h(x):
h\in\IFS(f_1,\dots,f_k)\}$.

First of all, observe that~\eqref{eq:blender1},
\eqref{eq:blender2}, \eqref{eq:blender3} and~ \eqref{eq:cover} are
$C^1$-robust conditions i.e., there are $C^1$-neighborhoods
$\mathcal{V}_i$ of $g_i$, for $i=1,2$, such that for each
$(f_1,f_2) \in \mathcal{V}_1\times \mathcal{V}_2$ there are maps
$\tilde{h}_1, \dots, \tilde{h}_k$, $\tilde{T}_1, \dots,
\tilde{T}_m$, $\tilde{S}_1,\dots,\tilde{S}_r$ in $\IFS(f_1,f_2)$
so that conditions~ \eqref{eq:blender1}, \eqref{eq:blender2},
\eqref{eq:blender3} and~\eqref{eq:cover} hold for these map for
the same $B$ and $D$. Hence, to prove the robust minimality of the
IFS generated by $g_1$ and $g_2$, it suffices to show that
\eqref{eq:blender1}, \eqref{eq:blender2}, \eqref{eq:blender3}
together with  \eqref{eq:cover} imply the forward minimality.

We begin by a simple observation that by~\eqref{intBD} and
\eqref{eq:blender2}, for every $i_1\dots i_n$, with $i_j \in
\{1,\dots,k\}$, it holds
\begin{equation}
\label{first-hi}
  h_{i_1}\circ \dots \circ h_{i_n}(B) \subset (a+\delta,a+\varepsilon).
\end{equation}
The rest of the proof will be done through a few steps for which
we need a bit of notation. For any $n>1$, define recursively sets
$$
    B^n_{i_1\ldots i_n} = h_{i_n}(B^{n-1}_{i_1\ldots i_{n-1}})=h_{i_n} \circ \dots \circ h_{i_1}
    (B),
$$
for $i_j=1,\ldots,k$ and $j=1,\ldots,n$. The proof of the lemma
below is short and elementary yet releases us from some notation
inconsistency in this context.
\begin{lemma}
\label{lemma2} For every $n\in \mathbb{N}$ it holds
\begin{align*}
B^n_{i_2\ldots i_{n+1}} \subset \bigcup_{i_1=1}^k
B^{n+1}_{i_1i_2\ldots i_{n+1}}  \qquad \text{and} \qquad B \subset
\bigcup_{i_1,\ldots,i_{n+1}=1}^k B^{n+1}_{i_1\ldots i_{n+1}}.
\end{align*}
\end{lemma}
\begin{proof}
The proof is by induction on $n$. First, we  show that
$$
B_{i_2}^1 \subset \bigcup_{i_1=1}^k B^2_{i_1 i_2} \quad \text{and}
\quad B\subset \bigcup_{i_1,i_2=1}^k B_{i_1i_2}^2.
$$
By condition~\eqref{eq:blender1}, $B \subset B_{1}^1\cup \dots
\cup B_{k}^1$, and so,
\begin{align*} \label{eq1}
   \bigcup_{i_1=1}^k B^2_{i_1 i_2} &= \bigcup_{i_1=1}^k h_{i_2} ( B^1_{i_1})
   = h_{i_2} (\bigcup_{i_1=1}^{k}B^1_{i_1})  \supset h_{i_2}(B) = B^1_{i_2}.
\end{align*}
From this one gets that
$$
  \bigcup_{i_1,i_2=1}^k B_{i_1i_2}^2 \supset \bigcup_{i_2=1}^k B_{i_2}^1 \supset B.
$$
Now, we assume that the lemma holds for $n-1$ and we shall prove
it for $n$. In the same way as before,
\begin{align*}
   \bigcup_{i_1=1}^k B^{n+1}_{i_1\dots i_{n+1}} &=
   \bigcup_{i_1=1}^k h_{i_{n+1}}(B^{n}_{i_1\dots i_{n}})
      = h_{{i_{n+1}}}(\bigcup_{i_1=1}^{k}B_{i_1\dots i_n}^n).
\end{align*}
By hypothesis of the induction, one has that  $B^{n-1}_{i_2\ldots
i_{n}} \subset \cup_{i_1=1}^k B^{n}_{i_1i_2\dots i_{n}}$ and so
$$
\bigcup_{i_1=1}^k B^{n+1}_{i_1\dots i_{n+1}} \supset h_{{i_{n+1}}}
(B^{n-1}_{i_2\dots i_{n}})=B^{n}_{i_2\dots i_{n+1}}. $$

For the second inclusion of the lemma, we first note that for
every $1\leq \ell \leq n$ and every $i_j=1,\dots,k$ with
$j=2,\dots,\ell+1$,
$$
B^{\ell}_{i_2\dots i_{\ell+1}} \subset \bigcup_{i_1=1}^k
B^{\ell+1}_{i_1i_2\dots i_{\ell+1}}.
$$
Hence,
$$
  \bigcup_{i_1,\dots,i_{n+1}=1}^k B^{n+1}_{i_1\dots i_{n+1}} \supset
  \bigcup_{i_2,\dots,i_{n+1}=1}^k B^{n}_{i_2\dots i_{n+1}}
  \supset \dots \supset \bigcup_{i_{n+1}=1}^k B^1_{i_{n+1}} \supset
  B,
$$
and the proof of the lemma is completed.
\end{proof}
\begin{lemma}
\label{lemma3} Under the
conditions~\eqref{eq:blender1},~\eqref{eq:blender2}
and~\eqref{eq:blender3}, for every $x\in \overline{B}$ there is a
sequence $(i_j)_{j>0}$, $i_j \in \{1,\ldots,k\}$ such that
$$
  x= \lim_{n\to\infty} h_{i_1}\circ
  \dots \circ h_{i_n}(y), \quad \text{for every} \ y\in B.
$$
\end{lemma}
\begin{proof}
Since $\overline{B} \subset  B^1_{1}\cup \dots \cup B^1_{k}$, for
every $x\in \overline{B}$ there is $i_1 \in \{1,\ldots,k\}$ such
that $x\in B^1_{i_1}$. We now proceed recursively. For $n>1$,
suppose that $i_j \in \{1,\ldots,k\}$, for $j=1,\ldots,n$, has
chosen in such a way that $ x \in B^n_{i_n\ldots i_1}$. Using the
inclusion $B^{n}_{i_n\ldots i_1} \subset \cup_{j=1}^k
B^{n+1}_{ji_n\ldots i_1}$, given by Lemma~\ref{lemma2}, one can
choose $i_{n+1} \in \{1,\ldots,k\}$ such that $x\in
B^{n+1}_{i_{n+1}i_n\ldots i_1}$. From this, a sequence
$i=i_1i_2\ldots = (i_j)_{j>0}$ of positive integers can be
constructed so that $ x \in B^n_{i_n\ldots i_1}$, for every $n\geq
1$. Hence, one gets
$$
x\in \bigcap_{n\geq 1} B^n_{i_n \ldots i_1} = \bigcap_{n\geq 1}
h_{i_1}\circ \dots \circ h_{i_n}(B) =\bigcap_{n\geq 1} A_n,
$$
where  $A_n=\cap_{\ell=1}^n h_{i_1}\circ \dots \circ
h_{i_\ell}(B)$, for every  $n\in \mathbb{N}$.

Since $ A_{n+1} \subset A_{n} \subset h_{i_1}\circ \dots \circ
h_{i_n}(B)$, using the mean-value theorem, there are $z_j \in B$,
for $j=1,\dots,n$, such that
\begin{align*}
 \mathrm{diam}&(A_n) \leq \mathrm{diam}(h_{i_1}\circ
 \dots \circ h_{i_n}(B))  \\
 &\leq \prod_{j=1}^{n-1} Dh_{i_j}(h_{i_{j+1}}\circ
 \dots\circ h_{i_n}(z_j)) \cdot Dh_{i_n}(z_n) \cdot \mathrm{diam}(B).
\end{align*}
According to~\eqref{first-hi}, $h_{i_{j+1}}\circ \dots\circ
h_{i_n}(z_j) \in (a+\delta,a+\varepsilon)$ and thus, condition
\eqref{eq:blender3} implies that $\mathrm{diam}(A_n)\leq \lambda^n
\mathrm{diam}(B)$. Consequently, $A_n$ being a nested sequence of
sets whose diameters goes to zero and hence one gets
$$
 \{ x \}= \bigcap_{n\geq 1} B^n_{i_n\ldots i_1}=
 \bigcap_{n\geq 1} h_{i_1}\circ \dots \circ h_{i_n}(B).
$$
This implies that for every $y\in B$ and every natural number $n$,
$$
|h_{i_1} \circ \dots \circ h_{i_n}(y)- x| \leq
\mathrm{diam}(h_{i_1}\circ \dots \circ h_{i_n}(B)) \leq  \lambda^n
\mathrm{diam}(B).
$$
This completes the proof of the lemma.
\end{proof}
\begin{proof}[Proof of Theorem~\ref{thmC}]
Now, we show the minimality of any IFS generated by two
diffeomorphisms $g_1$, $g_2$ satisfying
conditions~\eqref{eq:blender1}, \eqref{eq:blender2},
\eqref{eq:blender3} and~\eqref{eq:cover}. Fix a point $x\in
\mathbb{S}^1$ and an open set $I\subset \mathbb{S}^1$. The
previous lemma implies that
$$
B \subset \overline{\mathrm{Orb}_\Phi^+(x)}, \quad \text{for every
$x\in B$.}
$$
By~\eqref{eq:cover}, we find  $i\in\{1,\dots,r\}$ such that
$S_i(x)\in B$. Similarly, $B\cap T_j^{-1}(I)$ contains an open set
for some $j\in\{1,\dots,s\}$. Using the density of the orbits in
$B$, one can find $h\in\IFS(g_1,g_2)$ such that $h\circ S_i (x)\in
T_j^{-1}(I)$. Thus, $T_j\circ h \circ S_i(x)\in I$. This shows
that $\IFS(g_1,g_2)$ is minimal as we want.

In order to show the robust backward minimality, we observe that
we can obtain~\eqref{eq:blender1},
~\eqref{eq:blender2},~\eqref{eq:blender3} and~\eqref{eq:cover} but
now for  $\IFS(g_1^{-1},g_2^{-1})$. Thus, applying the same
argument for $\IFS(g_1^{-1},g_2^{-1})$, we conclude the robust
minimality of this system. Finally, to conclude
Theorem~\ref{thmC}, it suffices to apply Theorem~\ref{thmB} and
Remark~\ref{rem}.
\end{proof}

\section*{Acknowledgments}
We are grateful to Lorenzo J. D\'iaz  for his assistance and
review of preliminary versions of this paper and Ale Jan Homburg
and Malicet Dominique for their comments. During the preparation
of this article the authors were partially supported by the
following fellowships: P. G. Barrientos by MTM2011-22956 project
(Spain) and CNPq post-doctoral fellowship (Brazil); A. Fakhari by
grant from IPM. (No. 91370038). A. Fakhari and A. Sarizadeh thank
ICTP for their hospitality during their visit.


\medskip
Received August  2013; revised January 2014.
\medskip

\begin{thebibliography}{99}
\bibitem{An84}
V.~A.~{Antonov}.
\newblock \emph{Modeling cyclic evolution processes: Synchronization by means of
  random signal},
\newblock {Leingradskii Universitet Vestnik Matematika Mekhanika
  Astronomiia}, (1984) 67--76.

\bibitem{AJS12}
A.~{Arbieto}, A.~{Junqueira}, and B.~{Santiago}.
\newblock \emph{On weakly hyperbolic iterated functions systems},
\newblock preprint, \arXiv{1211.1738}.

\bibitem{BL13}
M.~F.~{Barnsley} and K.~{Le{\'s}niak}.
\newblock \emph{The chaos game on a general iterated function system from a
  topological point of view},
\newblock preprint, \arXiv{1203.0481}.

\bibitem{BV11}
M.~F.~{Barnsley} and A.~{Vince}.
\newblock \emph{The chaos game on a general iterated function
system},
\newblock Ergodic Theory and Dynam. Systems, \textbf{31} (2011), 1073--1079.

\bibitem{BV12}
M.~F.~{Barnsley} and A.~{Vince}.
\newblock \emph{The Conley Attractor of an Iterated Function System}.
\newblock Bull. Aust. Math. Soc., \textbf{88} (2013), 267–279.

\bibitem{BR13}
P.~G.~{Barrientos} and A.~{Raibekas}.
\newblock \emph{Dynamics of iterated function systems on the circle close to
  rotations},
\newblock {to appear in Ergodic Theory and Dynam. Systems} \arXiv{1303.2521}.

\bibitem{BG12}
C.~{Bonatti} and N.~{Guelman}.
\newblock \emph{Smooth Conjugacy classes of circle diffeomorphisms with irrational
  rotation number},
\newblock preprint, \arXiv{1207.2508}.

\bibitem{GGKV13}
T.~{Golenishcheva-Kutuzova}, A.~S.~{Gorodetski}, V.~{Kleptsyn},
and D.~{Volk}.
\newblock \emph{Translation numbers define generators of $F_k^+\to {\text{\rm
Homeo}_+}(\mathbb{S}^1)$},
\newblock preprint \arXiv{1306.5522}.

\bibitem{GI99}
A.~S.~{Gorodetski} and Yu.~S.~{Ilyashenko}.
\newblock \emph{Certain new robust properties of invariant sets and attractors of
  dynamical systems},
\newblock Functional Analysis and Its Applications, \textbf{33} (1999), 95--105.

\bibitem{GI00}
A.~S.~{Gorodetski} and Yu.~S.~{Ilyashenko}.
\newblock \emph{Some properties of skew products over a horseshoe and a
solenoid},
\newblock Tr. Mat. Inst. Steklova, \textbf{231} (2000), 96--118.

\bibitem{Hom11}
A.~J.~{Homburg}.
\newblock \emph{Circle diffeomorphisms forced by expanding circle maps},
\newblock {\em Ergodic Theory and Dynam. Systems}, \textbf{32} (2012), 2011--2024.

\bibitem{hom13}
A.~J.~{Homburg}.
\newblock \emph{Synchronization in iterated function systems},
\newblock preprint \arXiv{1303.6054}.

\bibitem{Hut81}
J.~E.~{Hutchinson}.
\newblock \emph{Fractals and self-similarity},
\newblock Indiana Univ. Math. J., \textbf{30} (1981), 713--747.

\bibitem{KN04}
V.~A.~{Kleptsyn} and M.~B.~{Nalskii}.
\newblock \emph{Contraction of orbits in random dynamical systems on the circle},
\newblock Functional Analysis and Its Applications, \textbf{38} (2004), 267--282.




\end{thebibliography}
\end{document}